\newtheorem{thm}{Theorem}
\newtheorem{lem}[thm]{Lemma}
\newtheorem{prop}[thm]{Proposition}
\newtheorem{cor}[thm]{Corollary}
\DeclareMathOperator{\End}{End}
\DeclareMathOperator{\Hom}{Hom}
\DeclareMathOperator{\Spec}{Spec}
\DeclareMathOperator{\M}{M}
\DeclareMathOperator{\rdim}{rdim}
\DeclareMathOperator{\SB}{SB}
\DeclareMathOperator{\ind}{ind}
\newcommand{\PP}{\mathbb{P}}
\newcommand{\Fp}{\mathbb{F}_p}
\newcommand{\Aa}{\mathbb{A}}
\author{Charles De Clercq}
\address[Charles De Clercq]{Universit\'{e} Sorbonne Paris Nord, Intitut Galil\'{e}e, Laboratoire Analyse, 
G\'{e}om\'{e}trie et Applications, Villetaneuse, France}
\email{declercq@math.univ-paris13.fr}
\urladdr{math.univ-paris13.fr/~declercq}
\author{Evan Marth}
\address[Evan Marth]{Department of Mathematics and Statistics, University of Ottawa, 150 Louis-Pasteur, Ottawa, ON, K1N 6N5, Canada}
\email{emarth@uottawa.ca}
\author{Kirill Zainoulline}
\address[Kirill Zainoulline]{Department of Mathematics and Statistics, University of Ottawa, 150 Louis-Pasteur, Ottawa, ON, K1N 6N5, Canada}
\email{kirill@uottawa.ca}
\urladdr{kirillmath.ca}
\keywords{Grothendieck-Chow motive, Lefschetz theorem, Milnor hypersurface, Severi-Brauer variety, Artin motive}
\subjclass{14C15; 14M15; 14L30}
\begin{document}

\title{Rost nilpotence for twisted Milnor hypersurfaces}

\begin{abstract} 
We show that the strong Rost nilpotence (in the sense of S.~Gille) holds for motives of generic hyperplane sections of twisted Milnor hypersurfaces. 
Hence, we provide a new family of examples of smooth projective algebraic varieties which satisfy the strong Rost nilpotence principle. 
As an application, we compute the $p$-canonical dimension for such varieties.
\end{abstract}

\thanks{E.M. and K.Z. were partially supported by NSERC Discovery grant RGPIN-2022-03060}

\maketitle

These notes are essentially based on the results from four papers:  \cite{VZ08, XZ24, Ma25, Gi25} to which we also refer for most of the notations and definitions.

Let $V$ be a vector space of dimension $n+1$ over a field $k$. Consider the partial flag variety 
\[ 
E = \{ W_1 \subsetneq W_n \subsetneq V \colon \dim W_i = i \} 
\] 
Let $\varphi \in \End(V)$ be an endomorphism with $n+1$ distinct eigenvalues over the algebraic closure of $k$ such that the subalgebra $L := k[\varphi] \subseteq \End(V)$ is Galois  of dimension $n+1$ over $k$. Consider the hyperplane section of $E$ 
\[ 
Y = \{ W_1 \subsetneq W_n \subsetneq V \colon \dim W_i = i, \; \varphi(W_1) \subsetneq W_n \} 
\]
By the main result of \cite{Ma25} (Thm.1.1 loc.cit.), the Chow motive of $Y$ (with integer coefficients) decomposes as 
\begin{equation}\label{eq:an}
\M(Y) = \bigoplus_{i=0}^{n-2} \M(\PP^n_k)(i) \oplus \M(\Spec L)(n-1). 
\end{equation} 

Assume now that $\varphi$ is of the form $\varphi' \oplus diag(a_{m+1},\ldots,a_{n})$, where $L'=k[\varphi']$ is proper Galois of degree $m+1$ and $a_i\in k^\times$.
So $L$ is the product of $L'$ with $n-m$ copies of $k$. Then the computation of $\M(Y)$ of the respective hyperplane section can be reduced to that of its Galois part $\M(Y')$ using the following analogue of the Rost filtration for quadrics:

In projective coordinates 
$([x_0:\ldots : x_n],[y_0:\ldots:y_n]) \in \PP^{n}\times \PP^n$ the hyperplane section $Y=Y_{m,n}$ is given by the system of equations
\[
\begin{cases}
\sum_{i=0}^n x_iy_i=0\quad \text{ (this defines }E)\\
(x_0,\ldots,x_m)\varphi' (y_0,\ldots,y_m)^t +\sum_{i=m+1}^n a_ix_iy_i=0.
\end{cases}
\]
Define $Z$ to be the closed subset of $Y_{m,n}$ given by $x_n=0$. Consider the open complement $Y_{m,n}\setminus Z$ given by $x_n\neq 0$. It projects onto $\PP^{n-1}=\{[y_0:\ldots :y_{n-1}]\}$ with the fibre $\Aa^{n-1}$. Now let $Z'=\PP^{n-1}\times pt$ be the closed subset in $Z$ given by $y_n\neq 0$ and $y_i=0$, $i<n$. Then the map forgetting the last coordinates $x_n$ and $y_n$ defines the projection 
$Z \setminus Z' \to Y_{m,n-1}$ with the fibre $\Aa^{\!1}$.
The relative cellular decomposition of \cite[Cor.66.4]{EKM} then implies the motivic decomposition:
\begin{equation}\label{eq:iso}
\M(Y_{m,n})=\M(\PP^{n-1})(n-1)\oplus \M(Y_{m,n-1})(1)\oplus \M(\PP^{n-1}).
\end{equation}

Following \cite[Def.2.1.(b)]{Gi25} we say that the strong Rost Nilpotence principle holds for a Chow motive $M$ over $k$  if for all field extensions $l/k$, the kernel of the restriction morphism
\[
res_{l/k}\colon \End_k(M) \longrightarrow \End_l(M_l)
\]
is a nilpotent ideal. 
We say that a motive $M$ is Artin-Tate if
\[
M = M_1(i_1)\oplus \cdots \oplus M_r(i_r),\quad i_1<\ldots <i_r,
\] 
where the motives $M_i$ are direct summands of motives of $0$-dimensional varieties.  
\begin{lem}\label{lem:rna}
The strong Rost Nilpotence holds for Artin-Tate motives.
\end{lem}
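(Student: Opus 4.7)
The plan relies on two elementary observations about $0$-dimensional motives, and in the end the kernel of $res_{l/k}$ will turn out to be zero, not merely nilpotent.

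First I would establish twist-orthogonality: for smooth projective $0$-dimensional $k$-varieties $X, X'$ and integers $a \neq b$,
\[
\Hom_k\bigl(\M(X)(a), \M(X')(b)\bigr) = 0,
\]
since this group is a Chow group in nonzero codimension on the $0$-dimensional variety $X \times_k X'$. The same vanishing holds over any extension $l/k$ and passes to direct summands, so the strict inequalities $i_1 < \ldots < i_r$ in the Artin-Tate decomposition force
\[
\End_k(M) = \prod_{j=1}^{r} \End_k(M_j),\qquad \End_l(M_l) = \prod_{j=1}^{r} \End_l((M_j)_l),
\]
with $res_{l/k}$ acting factor by factor. This reduces the lemma to showing injectivity of $res_{l/k}$ on $\End_k(N)$ for a single direct summand $N$ of $\M(\Spec A)$ with $A$ an étale $k$-algebra.

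Next I would use the fact that for a $0$-dimensional smooth projective $k$-variety $X$, the pullback $\CH^0(X) \to \CH^0(X_l)$ is injective: $\CH^0(X)$ is the free abelian group on the connected components of $X$, and base change only refines these components, never merges them. Applied to $X = \Spec A \times_k \Spec A$, this shows that $res_{l/k}$ is injective on $\End_k(\M(\Spec A)) = \CH^0(\Spec A \times_k \Spec A)$. Since $N$ is cut out by an idempotent $e$ and $\End_k(N) = e \cdot \End_k(\M(\Spec A)) \cdot e$, with restriction manifestly commuting with conjugation by $e$ by functoriality, injectivity descends to $\End_k(N)$.

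Combining the two reductions, $\ker(res_{l/k})$ on $\End_k(M)$ is trivial, hence trivially nilpotent. I do not foresee a genuine obstacle: the only points requiring care are the compatibility of the identification $\End_k(\M(X)) = \CH^0(X \times_k X)$ and of the summand description $\End_k(N) = e \cdot \End_k(\M(\Spec A)) \cdot e$ with the restriction map, both of which follow from standard functoriality.
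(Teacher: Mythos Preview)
Your proof is correct and follows essentially the same route as the paper: reduce via twist-orthogonality to a single summand of a $0$-dimensional motive, then use that restriction on Chow groups of $0$-dimensional schemes is injective because these groups are free on connected components. You are simply more explicit than the paper, which asserts the block-diagonal decomposition $\psi=\psi_1\oplus\cdots\oplus\psi_r$ and the injectivity of $res_{l/k}$ without spelling out the twist-orthogonality or the passage from $\M(\Spec A)$ to its direct summands.
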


\begin{proof}
Indeed, 
for any $\psi\in \End (M)$, we have $\psi=\psi_1\oplus\ldots \oplus\psi_r$, where each $\psi_i\in \End(M_i)$. Hence, we may assume $M$ is the motive of some 0-dimensional variety $X$. But the restriction morphism $res_{l/k}$ is known to be injective in this case (the Chow group of $X$ is free). 
\end{proof}

Combining \eqref{eq:an}, \eqref{eq:iso}, Lemma~\ref{lem:rna} and applying \cite[Thm.2.4]{Gi25} we obtain:

\begin{prop}
The strong Rost Nilpotence holds for the motive $\M(Y)$.
\end{prop}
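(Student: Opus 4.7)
My plan is to prove strong Rost nilpotence for $\M(Y) = \M(Y_{m,n})$ by induction on $n - m \geq 0$, combining the Artin-Tate base case \eqref{eq:an} with the recursive decomposition \eqref{eq:iso}, and gluing via \cite[Thm.2.4]{Gi25}.

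For the base case $n = m$, the endomorphism $\varphi = \varphi'$ is itself proper Galois of degree $n+1$, so \eqref{eq:an} applies directly. Expanding each $\M(\PP^n_k)$ as a sum of Tate motives $\M(\Spec k)(j)$, regrouping the resulting summands of \eqref{eq:an} by twist index, and noting that at twist $n-1$ one can realise the sum of copies of $\M(\Spec k)$ together with $\M(\Spec L)$ as the motive of a disjoint union of copies of $\Spec k$ with $\Spec L$, one writes $\M(Y)$ in the form $\bigoplus_j N_j(j)$ with strictly increasing $j$ where each $N_j$ is the motive of a $0$-dimensional variety. Hence $\M(Y)$ is Artin-Tate and Lemma~\ref{lem:rna} delivers strong Rost nilpotence.

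For the inductive step $n > m$, the decomposition \eqref{eq:iso} exhibits $\M(Y_{m,n})$ as the direct sum of $\M(\PP^{n-1})(n-1)$, $\M(Y_{m,n-1})(1)$, and $\M(\PP^{n-1})$. The two projective space summands are Tate, hence Artin-Tate, so strong Rost nilpotence holds for them by Lemma~\ref{lem:rna}; strong Rost nilpotence for $\M(Y_{m,n-1})$ is supplied by the inductive hypothesis. Applying \cite[Thm.2.4]{Gi25} then propagates strong Rost nilpotence along this direct sum decomposition, completing the induction.

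The main obstacle is precisely this last gluing step: strong Rost nilpotence is not preserved by direct sums \emph{per se}, because $\End(M_1 \oplus M_2)$ contains off-diagonal $\Hom$-groups whose restriction kernels are not directly controlled by the hypothesis on the individual summands. The role of \cite[Thm.2.4]{Gi25} is to handle these cross-terms, under the requirement that the decomposition is genuinely motivic and stable under restriction to field extensions. In our setting this stability is automatic, since \eqref{eq:iso} originates from the base-change functorial relative cellular decomposition \cite[Cor.66.4]{EKM}.
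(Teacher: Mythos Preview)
Your proof is correct and follows essentially the same route as the paper: the paper's one-line ``Combining \eqref{eq:an}, \eqref{eq:iso}, Lemma~\ref{lem:rna} and applying \cite[Thm.2.4]{Gi25}'' is precisely the induction on $n-m$ that you have spelled out, with \eqref{eq:an} furnishing the Artin--Tate base case and \eqref{eq:iso} plus \cite[Thm.2.4]{Gi25} the step. Your final paragraph on the cross-term obstruction and base-change stability is a correct elaboration of why \cite[Thm.2.4]{Gi25} applies here.
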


We now twist the variety $Y$ by a 1-cocycle corresponding to the central simple algebra $A$ of degree $n+1$ so that
$L$ is its maximal commutative subalgebra. In other words, the variety $E_A$ is the partial flag of right ideals
\[
E_A = \{ I_1 \subsetneq I_n \subsetneq A : \rdim I_i = i \},
\]
and $Y_A$ is given by $\varphi I_1\subset I_n$
(if $A$ is split, $I_i=\Hom(V, W_i)$).
Observe that $Y_A$ is a generalization of the major example of \cite{XZ24}. 

\begin{lem}\label{generic}
Let $X$ be an irreducible smooth projective variety over $k$. Assume that $X$ satisfies the strong Rost nilpotence generically (that is the $k(X)$-variety $X_{k(X)}$ satisfies the strong Rost nilpotence). 
Then $X$ satisfies the strong Rost nilpotence.
\end{lem}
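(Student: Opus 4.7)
The plan is to reduce strong Rost nilpotence for $X$ over $k$ to the generic hypothesis over $K := k(X)$ via a common extension argument. Let $l/k$ be an arbitrary field extension and $\alpha \in \End_k(\M(X))$ with $\alpha_l = 0$; the goal is to show that $\alpha$ is nilpotent. Since $X$ is irreducible and $X_l \to X$ is surjective, some irreducible component $Y$ of $X_l$ dominates $X$; taking $L := l(Y)$ produces a field containing both $l$ (as base field of $Y$) and $K$, the latter via the inclusion $k(X) \hookrightarrow l(Y)$ induced by the dominant morphism $Y \to X$.

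With this common extension in hand, transitivity of restriction yields $(\alpha_K)_L = \alpha_L = (\alpha_l)_L = 0$ in $\End_L(\M(X_L))$. Applying the generic strong Rost nilpotence hypothesis for $X_K$ to the extension $L/K$ then shows that $\alpha_K \in \End_K(\M(X_K))$ is nilpotent, say $\alpha_K^N = 0$; equivalently, $(\alpha^N)_K = 0$.

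The main obstacle is the final descent step: deducing that $\alpha$ itself is nilpotent from the nilpotency of $\alpha_K$, which amounts to showing that the kernel of $\mathrm{res}_{K/k} \colon \End_k(\M(X)) \to \End_K(\M(X_K))$ is a nil ideal. Realizing $\Spec K$ as the projective limit of nonempty opens of a second copy of $X$, continuity of Chow groups translates the condition $(\alpha^N)_K = 0$ into the vanishing of $p_{12}^*(\alpha^N)$ in $\CH^{\dim X}(X \times X \times U)$ for some nonempty open $U \subset X$. The localization sequence then places $p_{12}^*(\alpha^N)$ in the image of the proper pushforward from $X \times X \times Z$, where $Z = X \setminus U$; a support-and-dimension argument exploiting $\dim Z < \dim X$ together with the ring structure on $\End_k(\M(X))$ is expected to yield nilpotency of $\alpha^N$, and hence of $\alpha$.
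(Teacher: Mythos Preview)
Your overall strategy coincides with the paper's: pass to a common over-field of $l$ and $K=k(X)$, use the generic hypothesis to force $\alpha_K$ nilpotent, and then descend from $K$ to $k$. Your explicit construction of $L=l(Y)$ via a dominant component of $X_l$ is a valid way to produce the common extension that the paper simply calls~$F$.

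The gap is the descent step, which you yourself flag with ``is \emph{expected} to yield''. Knowing that $p_{12}^*(\alpha^N)$ lies in the image of the pushforward from $X\times X\times Z$ with $\dim Z<\dim X$ does not by itself control powers of $\alpha^N$ under the composition product in $\End_k(\M(X))\subset\CH^{\dim X}(X\times X)$: the support condition lives on $X\times X\times X$, whereas the ring structure lives on $X\times X$, and there is no direct way to iterate. The paper resolves this by passing to the category of Chow motives over the relative base $X$ and factoring
\[
\End_k(\M(X))\;\hookrightarrow\;\End_X(\M(X_X))\;\longrightarrow\;\End_{k(X)}(\M(X_{k(X)}))
\]
with the first arrow injective; it then cites \cite[Lem.3.2]{VZ08} and its extension in \cite[\S3.5]{Gi25} for the fact that the kernel of the second arrow is a nilpotent ideal. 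That lemma is precisely where your localization/dimension idea is made rigorous, but it needs the \emph{relative} composition product to make the induction on $\dim Z$ run. Note also that you aim only for a \emph{nil} ideal, whereas strong Rost nilpotence asks for a \emph{nilpotent} ideal; the cited result gives a uniform exponent depending only on $\dim X$, and multiplying it by the uniform exponent coming from the generic hypothesis for $L/K$ then bounds the nilpotency of $\ker(res_{l/k})$ as an ideal, not merely elementwise.
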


\begin{proof}
Following the proof of \cite[Prop.3.1]{VZ08} the restriction morphism  factors as the composite
\[
\End_k(\M(X))\xrightarrow{res_{X/k}} \End_{X}(\M(X_X))\xrightarrow{res_{k(X)/X}}\End_{k(X)}(\M(X_{k(X)}))
\]
where the middle term denotes endomorphisms in the category of Chow motives over the relative base $X$, the first map is injective, and the second map is induced by passing to the generic point of $X$. 
Observe that the kernel of $res_{k(X)/X}$ is the nilpotent ideal by the proof of \cite[Lem.3.2]{VZ08} extended in \cite[\S3.5]{Gi25}. 

Now the kernel of the restriction $res_{l/k}\colon \End_k(\M(X)) \to \End_l(\M(X_l))$ is contained in the kernel of $res_{F/k}$, where $F$ contains both $l$ and $k(X)$, so is nilpotent.
\end{proof}

\begin{thm}
The strong Rost nilpotence holds for the motive $\M(Y_A)$.
\end{thm}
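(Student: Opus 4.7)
The plan is to invoke Lemma~\ref{generic} applied to $X = Y_A$, which reduces the claim to establishing the strong Rost nilpotence for $(Y_A)_F$ over the function field $F = k(Y_A)$. All subsequent work then takes place after this generic base change, and the whole strategy is the standard generic-splitting reduction used in \cite{VZ08,XZ24}: turn the twisted problem into the already-handled untwisted one by base-changing to the function field of the variety itself.

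Over $F$ the variety $(Y_A)_F$ acquires a tautological $F$-rational point, namely the generic point itself. By the very definition of $Y_A$ this point is a flag $I_1 \subsetneq I_n \subsetneq A_F$ of right ideals with $\rdim I_i = i$. The first member $I_1$ is then an $F$-rational point of $\SB(A)$, and I would invoke the classical fact that $\SB(A)$ admits a rational point over a field if and only if $A$ splits over that field. Consequently $A_F$ is split, the twisting cocycle trivializes, and $(Y_A)_F$ is $F$-isomorphic to $Y_F$.

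At this stage I would simply apply the preceding Proposition with the base field $F$ in place of $k$. Its proof combines the Artin-Tate decomposition \eqref{eq:an}, the recursive relation \eqref{eq:iso}, Lemma~\ref{lem:rna} and \cite[Thm.2.4]{Gi25}, all of which are stable under arbitrary base change: the decompositions pass to $F$ formally, and $L \otimes_k F$ remains an étale algebra over $F$, so the motive of $\Spec L$ base-changed to $F$ is still the motive of a $0$-dimensional $F$-variety and in particular Artin. This yields the strong Rost nilpotence for $\M(Y_F) \cong \M((Y_A)_F)$ over $F$, and Lemma~\ref{generic} delivers the theorem.

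I expect no serious obstacle beyond verifying the generic splitting of $A$, which is immediate from the description of $Y_A$ as a variety of flags of right ideals containing a line ideal. The one place where some care is needed is simply to check that the ingredients of the previous Proposition behave correctly under the scalar extension $k \hookrightarrow F$; but since the decompositions used there are defined by explicit projective geometry and the Artin-Tate property is preserved by arbitrary base change, this presents no real difficulty.
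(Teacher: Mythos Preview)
Your argument is correct and follows the same overall architecture as the paper: reduce to the generic fibre via Lemma~\ref{generic}, show that $A$ splits over $k(Y_A)$ so that $(Y_A)_{k(Y_A)}\cong Y_{k(Y_A)}$, and then invoke the Proposition (whose ingredients are indeed stable under base change). The only substantive difference lies in how the splitting of $A$ over $F=k(Y_A)$ is established. The paper argues motivically: it uses the decomposition of \cite[\S2]{XZ24} to see that $\M(\SB(A))$ is the unique summand supported in codimension~$0$, so the Tate summand that appears after passing to $F$ forces $\M(\SB(A)_F)$ to contain $\mathbb{Z}$, whence $A_F$ splits by \cite{Ka96}. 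You instead observe, more directly, that the forgetful morphism $Y_A\hookrightarrow E_A\to \SB(A)$, $(I_1\subset I_n)\mapsto I_1$, carries the tautological $F$-point of $Y_A$ to an $F$-point of $\SB(A)$, so $A_F$ is split by the classical criterion. Your route is more elementary in that it avoids both the \cite{XZ24} decomposition and \cite{Ka96} for this step; the paper's route, on the other hand, illustrates how the motivic splitting already encodes the generic triviality of the cocycle, which is conceptually in line with the rest of the argument and reuses a decomposition needed later anyway for the canonical $p$-dimension computation.
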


\begin{proof}
From \cite[\S2]{XZ24} it follows that the motive $\M(Y_A)$ splits as
\[
\M(Y_A)=N\oplus \bigoplus_{i=0}^{n-3}\M(\SB(A))(i),
\] 
where the motive $N$ is supported only in the middle codimension (over the algebraic closure of $k$, $N$ splits as a direct sum of Tate motives twisted by $n-2$).

Since $\M(Y_A)$ is supported in dimension zero only by the motive of a Severi-Brauer variety $\M(\SB(A))$, the latter must contain a Tate motive (in dimension 0) over the generic point $k(Y_A)$. Therefore, by \cite{Ka96} the algebra $A$ splits over $k(Y_A)$. Hence the variety $Y_A$ turns over $k(Y_A)$ into the variety $Y$, and the strong Rost Nilpotence holds for $Y_A$ generically. To finish we apply Lemma~\ref{generic}.
\end{proof}

Since the variety $Y_A$ is geometrically split, and it satisfies the Rost Nilpotence principle, by \cite[Cor.2.6]{Ka13} the Krull-Schmidt principle holds for its motive with finite coefficients 
$\Fp=\mathbb{Z}/p\mathbb{Z}$, where $p$ is a prime. The decomposition of the motive of $\SB(A)$ appearing in \cite{Ka96} can then be refined as
\[
\M(\SB(A);\Fp)=
\bigoplus_{r=0}^{m} \M(\SB(D_p);\Fp)(rd)\;
\text{ where } d=p^{v_p(\ind(A))},\; m=\tfrac{\deg(A)}{d}-1,
\]
and $D_p$ is the $p$-primary component of a division algebra Brauer equivalent to $A$.

\begin{cor}
The canonical $p$-dimension of the variety $Y_A$  is equal to $d-1$.
\end{cor}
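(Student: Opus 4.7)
The plan is to invoke Karpenko's upper $p$-motive theorem: for a geometrically split, geometrically irreducible smooth projective variety $X$ satisfying the Rost nilpotence principle, $\cdim_p(X)$ equals the dimension of the indecomposable direct summand of $\M(X;\Fp)$ containing the Tate motive $\Fp(0)$ (the upper $p$-motive of $X$). The variety $Y_A$ is geometrically split because over the algebraic closure both $A$ and $L$ split, and then \eqref{eq:an} expresses the geometric motive as a sum of Tate motives; it is geometrically irreducible by construction; and it satisfies strong Rost nilpotence by the preceding theorem. Hence the Krull--Schmidt principle applies to $\M(Y_A;\Fp)$ by \cite[Cor.2.6]{Ka13}.

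Next I would isolate the summand containing $\Fp(0)$. Substituting the refined splitting $\M(\SB(A);\Fp) = \bigoplus_{r=0}^{m} \M(\SB(D_p);\Fp)(rd)$ into the decomposition $\M(Y_A;\Fp) = N \oplus \bigoplus_{i=0}^{n-3} \M(\SB(A);\Fp)(i)$ from the preceding theorem yields
\[
\M(Y_A;\Fp) = N \oplus \bigoplus_{i=0}^{n-3}\bigoplus_{r=0}^{m} \M(\SB(D_p);\Fp)(i+rd).
\]
The summand $N$ is concentrated in codimension $n-2$, hence contributes nothing in degree zero. Among the remaining summands, only the one with $i = r = 0$ receives the Tate motive $\Fp(0)$ geometrically, and \eqref{eq:an} shows that $\Fp(0)$ has multiplicity one in the geometric motive $\M(Y_A;\Fp)$ over the algebraic closure. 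By Krull--Schmidt the upper $p$-motive of $Y_A$ is therefore $\M(\SB(D_p);\Fp)$, which is indecomposable by Karpenko's theorem on the motive of a Severi--Brauer variety of a $p$-primary division algebra.

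Finally, $\dim \SB(D_p) = \deg(D_p) - 1 = d-1$, and the upper motive theorem delivers $\cdim_p(Y_A) = d-1$. The main subtlety is the identification of the upper motive: without the Krull--Schmidt property the two-step decomposition above would not be canonical enough to read off indecomposable summands, so Rost nilpotence (established in the preceding theorem) is essential at exactly this point.
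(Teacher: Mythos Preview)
Your argument is correct and follows the same strategy as the paper: identify the upper indecomposable summand of $\M(Y_A;\Fp)$ as $\M(\SB(D_p);\Fp)$ using the known motivic decompositions together with Krull--Schmidt, and then read off $\cdim_p(Y_A)=\dim\SB(D_p)=d-1$ from Karpenko's theory in \cite{Ka10}. The only cosmetic difference is that the paper makes the duality step explicit---it notes that the dual $U^{*}$ is an indecomposable lower summand and hence, by Krull--Schmidt, a Tate twist of $U$ (\cite[Prop.~5.2]{Ka10})---before invoking \cite[Thm.~5.1]{Ka10}, whereas you package this as a single appeal to the ``upper motive theorem''.
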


\begin{proof} 
Let $U$ be the upper motive of $\M(Y_A; \Fp)$. It is isomorphic to the indecomposable motive $\M(\SB(D_p);\Fp)$ up to a shift. Observe that $\dim(\SB(D_p))=d-1$. Being an indecomposable lower summand, the dual motive $U^*$ of $U$ is isomorphic to $U(\dim(Y_A)-d-1)$ by the Krull-Schmidt property (see \cite[Prop.5.2]{Ka10}). By \cite[Thm.5.1]{Ka10}, the canonical $p$-dimension of $Y_A$ is then equal to the dimension of $U$ that is $d-1$.
\end{proof}

\end{document}